\newcommand{\EQ}{\begin{equation}}
\newcommand{\EN}{\end{equation}}
\newtheoremstyle{examplestyle}
{\topsep}   
{\topsep}   
{\slshape}  
{0pt}       
{\bfseries} 
{.}         
{5pt plus 1pt minus 1pt} 
{}          
\newtheorem{theorem}{Theorem}[section]
\newtheorem{definition}[theorem]{Definition}
\newtheorem{proposition}[theorem]{Proposition}
\newtheorem{lemma}[theorem]{Lemma}
\newtheorem{remark}[theorem]{Remark}
\theoremstyle{examplestyle}
\newcommand{\F}{\mathbb{F}}
\renewcommand{\u}{{\mathbf{1}}}
\newcommand{\cS}{{\cal S}}
\newcommand{\bv}{\mathbf{v}}
\newcommand{\ba}{\mathbf{a}}
\newcommand{\bb}{\mathbf{b}}
\newcommand{\bc}{\mathbf{c}}
\newcommand{\bz}{\mathbf{z}}
\newcommand{\bu}{\mathbf{u}}
\newcommand{\bw}{\mathbf{w}}
\newcommand{\be}{\mathbf{e}}
\newcommand{\bx}{\mathbf{x}}
\newcommand{\by}{\mathbf{y}}
\newcommand{\bkappa}{\boldsymbol{\kappa}}
\newcommand{\wt}{\operatorname{wt}}
\newcommand{\Aut}{\operatorname{Aut}}
\newcommand{\Supp}{\operatorname{supp}}
\newcommand{\Rank}{\operatorname{rank}}
\newcommand{\Gcd}{\operatorname{gcd}}
\newcommand{\Degree}{\operatorname{degree}}
\newcommand{\HFP}{\operatorname{HFP}}
\title{Circulant Hadamard matrices as $\HFP$-codes of type $C_{4n}\times C_2$.\footnote{This work has been partially supported by the Spanish MICINN grants TIN2016-77918-P, MTM2015-69138-REDT and the Catalan AGAUR grant 2014SGR-691.}}
\author{J. Rif\`{a} \\
	Department of Information and Communications Engineering,\\
	Universitat Aut\`{o}noma de Barcelona}
\begin{document}
	\maketitle

\begin{abstract}
	We prove that a circulant Hadamard code of length $4n$ can always be seen as an $\HFP$-code (Hadamard full propelinear code) of type $C_{4n}\times C_2$, where $C_2=\langle \bu\rangle$ or the same, as a cocyclic Hadamard code. We compute the rank and dimension of the kernel of these kind of codes.
\end{abstract}
\noindent \textbf{Keywords:} circulant Hadamard matrix; cocyclic Hadamard matrix; kernel; propelinear code; rank.

\noindent \textbf{Mathematics Subject Classification (2010):} {5B; 5E; 94B}
\section{Introduction}

Let $\F$ be the binary field.
For any $\bv\in \F$, we define the support of $\bv$ as the set of nonzero positions of $\bv$ and we denote it by $\Supp(\bv)$. For a vector
$\bx \in \F^n$ denote by $\wt(\bx)$ its \textit{Hamming weight} (i.e. the number
of its nonzero positions).
For two vectors $\bx=(x_1,\ldots, x_n)$ and $\by = (y_1, \ldots, y_n)$
from $\F^n$ denote by $d(\bx, \by)$ the \textit{Hamming distance} between $\bx$
and $\by$ (i.e. the number of positions $i$, where $x_i \neq y_i$).

A binary code $C$ of length $n$ is a subset of $\F^n$. Two structural properties of nonlinear codes
are the rank and dimension of the kernel. The rank of a binary code $C$, $r =
\Rank(C)$, is the dimension of the linear span of $C$.  The kernel of a binary code $C$ is defined
as $K(C) = \{x \in \F^n : x + C = C\}$. 
Let $\be, \bu$ be the binary all zeros vector and all ones vector, respectively. If $\be\in C$ then $K(C)$ is a linear subspace of $C$ with dimension $k$. We use
$\cS_n$ to denote the group of all coordinate permutations.

\begin{definition}
	An Hadamard matrix is an $4n\times 4n$ matrix $H$ containing entries from the
	set $\{1,-1\}$, with the property that:
	$$
	HH^T = 4nI,
	$$
	where $I$ is the identity matrix.
\end{definition}

Two Hadamard matrices are {\it equivalent} if one can be obtained
from the other by permuting rows and/or columns and multiplying them
by $-1$. We can change the first row and
column of $H$ into $+1$'s and we obtain an equivalent Hadamard
matrix which  is called {\it normalized}.
Form a normalized Hadamard matrix, if $+1$'s are replaced by 0's and $-1$'s by
1's, we obtain a {\it (binary) Hadamard matrix
	$H'$} with orthogonal rows and where any two rows agree in $2n$ places and differ in $2n$ places. The binary code
consisting of the rows of $H'$ and their complements is called
a {\it (binary) Hadamard code} \cite{McWill}.

Next lemmas are well known. We include them without proof.

\begin{lemma}\cite{prv1,prv3}\label{odd}
	Let $C$ be an Hadamard code of length $2^sn'$, where $n'$ is odd. The dimension of the kernel is $1\leq k\leq s-1$.
\end{lemma}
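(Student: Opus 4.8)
The statement splits into a lower and an upper bound; write $N=2^{s}n'$ for the length. For the lower bound $k\ge 1$, recall that a binary Hadamard code $C$ is closed under complementation (the complement of every row of the normalized matrix is again a row, hence a codeword), so $\bu+C=C$ and therefore $\bu\in K(C)$. Since $\bu\neq\be$, this already gives $\dim K(C)\ge 1$. After normalization $\be\in C$, so $K(C)$ is a linear subspace of $C$ and in particular $K(C)\subseteq C$.

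For the upper bound, the plan is to exploit that $K=K(C)$, being a linear subcode, consists of genuine codewords: every nonzero $\bw\in K$ satisfies $\wt(\bw)\in\{N/2,N\}$, and $\bu$ is the unique codeword of weight $N$. Thus $K$ is a two-weight $[N,k]$ linear code with weights $N/2$ and $N$. I would study it through a generator matrix $G$ by recording, for each $\bc\in\F^{k}$, the multiplicity $A_{\bc}$ of $\bc$ as a column of $G$. Expressing each weight as a column count and passing to the character sum $\hat A(\ba)=\sum_{\bc}A_{\bc}(-1)^{\ba\cdot\bc}$, the two-weight condition becomes $\hat A(\zero)=N$, $\hat A(\ba_{0})=-N$ for the unique $\ba_{0}$ with $\ba_{0}G=\bu$, and $\hat A(\ba)=0$ for every other $\ba$. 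Fourier inversion then yields $A_{\bc}=N/2^{\,k-1}$ when $\ba_{0}\cdot\bc=1$ and $A_{\bc}=0$ otherwise, so each admissible column occurs exactly $N/2^{\,k-1}$ times and $K$ is, up to a coordinate permutation, an $(N/2^{\,k-1})$-fold repetition of a first-order Reed--Muller code $\mathrm{RM}(1,k-1)$. Integrality of $N/2^{\,k-1}=2^{\,s-k+1}n'$ together with $n'$ odd forces $2^{\,k-1}\mid 2^{s}$, that is $k\le s+1$.

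The delicate point, and the step I expect to be the main obstacle, is to sharpen this to $k\le s-1$: the divisibility argument alone still allows $k=s$ and $k=s+1$, the latter being realized by the linear $\mathrm{RM}(1,s)$ Hadamard code when $n'=1$. Ruling out the two top values cannot be done from the internal structure of $K$ alone; one has to bring in how the cosets of $K$ are arranged inside the full code $C$, using that two distinct non-complementary codewords of a Hadamard code lie at distance exactly $N/2$ (so that two weight-$N/2$ kernel words meet in $N/4$ coordinates). The idea is to show that a kernel of dimension $s$ or $s+1$ would force $C$ itself to collapse onto a repetition of a first-order Reed--Muller code, which is incompatible with $C$ being a Hadamard code of the type under consideration. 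This finer combinatorial analysis is precisely what the cited results \cite{prv1,prv3} supply.
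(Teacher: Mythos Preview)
The paper does not prove this lemma: it is quoted from \cite{prv1,prv3} with the explicit remark ``We include them without proof.'' So there is no argument in the paper to compare yours against; I can only assess your proposal on its own.

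Your lower bound $k\ge 1$ is correct, and your column-multiplicity / Fourier computation is a clean way to see that $K(C)$ is, up to coordinate permutation, an $(N/2^{\,k-1})$-fold repetition of $\mathrm{RM}(1,k-1)$, whence $k\le s+1$. But that is where your proof stops: the last paragraph is not an argument for $k\le s-1$, it is a description of the difficulty followed by an appeal to the very references the lemma cites. As written, the proposal proves only $1\le k\le s+1$; the two missing units are the whole content of the lemma beyond the trivial bound. You are also right that the statement is literally false for $n'=1$ with $C=\mathrm{RM}(1,s)$, so an implicit hypothesis ($n'>1$, equivalently $C$ non-linear) is in play; the paper only invokes the lemma in that regime.

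If you want to close the gap for $n'>1$ without leaving the paper, the projection lemma stated later as \Cref{proj} gives a short route. If $\bkappa\in K(C)\setminus\{\be,\bu\}$, the restriction map to $\Supp(\bkappa)$ sends $K(C)$ into $K(C')$, and its kernel (as a linear map on $K(C)$) is exactly $\{\be,\bkappa+\bu\}$; hence $\dim K(C')\ge k-1$, while $C'$ is a Hadamard code of length $2^{s-1}n'$, still non-linear since $n'>1$. Iterating $s-1$ times under the assumption $k\ge s$ produces a Hadamard code of length $2n'$ with $n'>1$ odd, which does not exist. This forces $k\le s-1$. Your coset/collapse heuristic in the last paragraph is vaguer and not obviously the right mechanism; the projection-and-halving argument is what actually does the work in the cited sources.
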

\begin{lemma}\cite[Th. 2.4.1 and Th. 7.4.1]{ak}\label{aske}
	Let $C$ be an Hadamard code of length $4n=2^sn'$, where $n'$ is odd.
	\begin{enumerate}[(i)]
		\item If $s\geq 3$ then the rank of $C$ is $r\leq 2n$, with equality if $s=3$.
		\item If $s=2$ then $r\geq 4n-1$.
	\end{enumerate}
\end{lemma}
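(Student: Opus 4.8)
The plan is to work throughout with the binary matrix $H'$ coming from a normalized $H$, writing its rows as $\bx_1,\dots,\bx_{4n}$ with $\bx_1=\be$. Every other row has $\wt(\bx_i)=2n$, and since any two distinct rows agree in $2n$ positions and differ in $2n$, inclusion--exclusion forces $|\Supp(\bx_i)\cap\Supp(\bx_j)|=n$ for $i\neq j$. The code is $C=\{\bx_i\}\cup\{\bx_i+\bu\}$, so $\langle C\rangle=\langle \bx_1,\dots,\bx_{4n},\bu\rangle$ and $r=\dim\langle C\rangle$. Because every row has even weight $2n$ and $\wt(\bu)=4n$ is even, $\bu$ is orthogonal to all of $C$; as $\bu\neq\be$ this already gives $r\le 4n-1$ in all cases. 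The engine of both parts is the reduction of these incidences modulo $2$: over $\F$ one has $\bx_i\cdot\bx_i\equiv 2n\equiv 0$, $\bx_i\cdot\bx_j\equiv n$ for $i\neq j$, and $\bu\cdot\bx_i\equiv 2n\equiv 0$. The decisive point is that $n=2^{s-2}n'$ is even exactly when $s\ge 3$ and odd exactly when $s=2$.

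For (i) with $s\ge 3$, $n$ is even, so all of the inner products above vanish in $\F$. Hence the generating set $\{\bx_1,\dots,\bx_{4n},\bu\}$ is totally isotropic, i.e.\ $\langle C\rangle$ is self-orthogonal, $\langle C\rangle\subseteq\langle C\rangle^\perp$. A self-orthogonal code of length $4n$ has dimension at most $2n$, and so $r\le 2n$ at once.

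For (ii) with $s=2$, $n$ is odd. Let $R=\langle \bx_2,\dots,\bx_{4n}\rangle$ (the zero row $\bx_1$ is discarded). Its $\F$-Gram matrix is $J+I$ of size $(4n-1)\times(4n-1)$, whose $\F$-kernel is spanned by the all-ones coefficient vector (using that $4n-1$ is odd), so $\Rank_{\F}(J+I)=4n-2$; since the rank of a Gram matrix bounds the rank of the vectors, $\dim R\ge 4n-2$. On the other hand the column balance of $H'$ gives the dependency $\sum_{i\ge 2}\bx_i=\be$, whence $\dim R\le 4n-2$, and therefore $\dim R=4n-2$. It remains to see $\bu\notin R$: if $\bu=\sum_{i\in S}\bx_i$, then pairing with each $\bx_j$ and using $\bx_i\cdot\bx_j\equiv 1$ $(i\neq j)$, $\bx_j\cdot\bx_j\equiv 0$, $\bu\cdot\bx_j\equiv 0$ forces $[\,j\in S\,]\equiv|S|\pmod 2$ for every $j$, so $S=\emptyset$ or $S=\{2,\dots,4n\}$; the first contradicts $\bu\neq\be$ and the second gives $\bu=\sum_{i\ge 2}\bx_i=\be$, again impossible. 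Hence $r=\dim R+1=4n-1\ge 4n-1$.

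The hard part is the equality $r=2n$ at $s=3$. Here part (i) only gives $r\le 2n$, and the $\F$-Gram matrix is now identically zero (as $n$ is even), so it yields no lower bound; the whole difficulty is the reverse inequality $r\ge 2n$, equivalently that $\langle C\rangle$ is self-dual. I expect to obtain it not from the mod-$2$ incidences but from a finer $2$-adic invariant. When $s=3$ one has $n\equiv 2\pmod 4$, all rows are doubly even ($\wt(\bx_i)=4n'\equiv 0\pmod 4$, and the even pairwise intersections make $\langle C\rangle$ doubly even), and the precise power of $2$ dividing $|\det H|=(4n)^{2n}$ constrains the elementary divisors of $H$ (which satisfy $d_i d_{4n+1-i}=4n$ by $HH^T=4nI$). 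Translating this through the Smith normal form into the $2$-rank $\Rank_{\F}(H')=\dim R$ is exactly the content of the cited Th.~2.4.1 and Th.~7.4.1 of \cite{ak}, and it forces the isotropic subspace $\langle C\rangle$ to be maximal, of dimension exactly $2n$. Carrying out this Smith-normal-form / $2$-adic bookkeeping is the main obstacle; everything else reduces to the elementary incidence count above.
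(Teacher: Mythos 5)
First, a point of reference: the paper does not prove this lemma at all --- it is stated as ``well known'' and cited from Assmus--Key \cite{ak}, so there is no internal proof to compare against. Judged on its own terms, your proposal correctly handles everything except the one genuinely hard claim. Your part (ii) is sound: the incidence count $|\Supp(\bx_i)\cap\Supp(\bx_j)|=n$ is right, the Gram matrix $J+I$ of odd size $4n-1$ does have $\F$-rank $4n-2$, the column-balance dependency $\sum_{i\ge 2}\bx_i=\be$ is correct, and the argument that $\bu\notin R$ works; together these give $r=4n-1$. Likewise the self-orthogonality argument for the bound $r\le 2n$ when $s\ge 3$ is correct and is the standard one.

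The genuine gap is the equality $r=2n$ when $s=3$, which you explicitly leave as ``the main obstacle'' and then discharge by appeal to ``exactly the content of the cited Th.~2.4.1 and Th.~7.4.1 of \cite{ak}'' --- i.e.\ you cite the very statement you are asked to prove, so this part is not a proof but a deferral. Moreover, the sketch you offer is not just incomplete but points in a direction that does not work as stated: the Smith normal form of $H$ itself, with its elementary-divisor pairing $d_i d_{4n+1-i}=4n$, cannot directly control $\Rank_{\F}(H')$, because $H$ reduced mod $2$ is the all-ones matrix $J$ (of $2$-rank $1$), not the binary matrix $H'=(J-H)/2$; the passage from the integral invariants of $H$ to the $2$-rank of $(J-H)/2$ is precisely where the nontrivial work lies (in \cite{ak} it is done through the codes of the associated Hadamard designs). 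Until that step is carried out, the ``with equality if $s=3$'' clause --- which is the only part of the lemma the elementary incidence counting cannot reach --- remains unproven.
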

In an Hadamard code we always have $\{\be,\bu\}\subset K(C)$. We will say that the kernel is trivial when $\{\be,\bu\}= K(C)$.
\begin{lemma}\cite{RS17}\label{proj}
	Let $C$ be a non linear Hadamard code of length $4n$ with non trivial kernel. Let $\bkappa\in K(C)$ be such that $\bkappa\notin \{\be,\bu\}$. Then, the projection of $C$ onto $\Supp(\bkappa)$ is an Hadamard code of length $2n$.
\end{lemma}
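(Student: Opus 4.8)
The plan is to use the hypothesis $\bkappa\in K(C)$ to produce, for each pair of codewords, two distance equations that together force all projected distances into $\{0,n,2n\}$. First I would record the basic data. Since $\be\in C$, the kernel is a linear subspace of $C$, so $\bkappa\in C$; as $\bkappa\notin\{\be,\bu\}$ and every codeword other than $\be,\bu$ has weight $2n$, we get $\wt(\bkappa)=2n$ and hence $|\Supp(\bkappa)|=2n$. Because $K(C)$ is linear and $\bu\in K(C)$, the complement $\bkappa+\bu$ also lies in $K(C)\subseteq C$, and its support is exactly the complement of $\Supp(\bkappa)$. Write $S=\Supp(\bkappa)$, let $\bar S$ be its complement, let $\pi$ be the (linear) restriction map onto the $2n$ coordinates of $S$, and for $\bc,\bc'\in C$ write $a=d_S(\bc,\bc')$ and $b=d_{\bar S}(\bc,\bc')$ for the number of coordinates in $S$, respectively $\bar S$, where they differ, so that $d(\bc,\bc')=a+b$. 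The goal is to show $\pi(C)$ is a Hadamard code of length $2n$.

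The crucial step is the following. For any $\bc,\bc'\in C$ we also have $\bc+\bkappa\in C$, since $\bkappa\in K(C)$, so both $d(\bc,\bc')$ and $d(\bc+\bkappa,\bc')$ belong to $\{0,2n,4n\}$. As $\bkappa$ is the indicator vector of $S$, adding it flips exactly the coordinates in $S$, whence $d(\bc+\bkappa,\bc')=(2n-a)+b$. Writing $X=a+b$ and $Y=(2n-a)+b$, both $X,Y\in\{0,2n,4n\}$ and $X-Y=2a-2n$, so $a=n+\tfrac12(X-Y)\in\{-n,0,n,2n,3n\}$; since $0\le a\le 2n$ this leaves $a\in\{0,n,2n\}$. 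Hence $d\big(\pi(\bc),\pi(\bc')\big)=a\in\{0,n,2n\}$ for all codewords.

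Finally I would count fibers and identify the structure. One checks $\pi(\be)=\pi(\bkappa+\bu)=\be$ and $\pi(\bu)=\pi(\bkappa)=\bu$ (the all-zeros and all-ones vectors of length $2n$). If $\pi(\bc)=\pi(\bc')$ then $a=0$, so $d(\bc,\bc')=b\le 2n$ forces $d(\bc,\bc')\in\{0,2n\}$; in the nonzero case $\bc+\bc'$ is supported on all of $\bar S$, i.e.\ $\bc'=\bc+\bkappa+\bu$. Thus the fibers of $\pi$ are precisely the pairs $\{\bc,\bc+\bkappa+\bu\}$, so $|\pi(C)|=8n/2=4n$. Since $\bu\in K(C)$ and $\pi$ is linear, $\pi(C)$ is closed under complementation, and by the previous step two distinct projected codewords are at distance $2n$ exactly when they are complementary and at distance $n$ otherwise. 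Choosing one representative from each of the $2n$ complementary pairs then yields $2n$ vectors pairwise at Hamming distance $n$; passing to $\pm1$ entries, these are the rows of a $2n\times 2n$ Hadamard matrix, and $\pi(C)$ is its associated Hadamard code of length $2n$. The main obstacle is the two-distance computation of the middle paragraph: extracting both equations from $\bkappa\in K(C)$ and observing that they pin $a$ down to $\{0,n,2n\}$; the only other delicate point is checking that every fiber of $\pi$ has size exactly two, which is what yields the correct codeword count.
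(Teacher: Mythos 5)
Your proof is correct, and it is worth noting that it is self-contained: the paper states this lemma without proof, importing it from \cite{RS17}, so there is no in-paper argument to compare yours against. Your central device --- comparing the two distances $d(\bc,\bc')$ and $d(\bc+\bkappa,\bc')$, both forced into $\{0,2n,4n\}$ because $\bc+\bkappa\in C$ whenever $\bkappa\in K(C)$, and deducing that the projected distance $a$ lies in $\{0,n,2n\}$ --- is exactly the right use of the kernel hypothesis, and the fiber computation ($\pi(\bc)=\pi(\bc')$ if and only if $\bc'\in\{\bc,\,\bc+\bkappa+\bu\}$, which needs $\bkappa+\bu\in K(C)$ and gives $|\pi(C)|=8n/2=4n$) correctly accounts for the codeword count. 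Two small remarks. First, the nonlinearity of $C$ is never used in your argument, nor is it needed; it appears in the hypothesis only because that is how the lemma is invoked later in the paper. Second, at the very end, the $\pm1$ matrix you build from one representative per complementary pair has pairwise orthogonal rows and can be taken to contain the all-ones row (choose $\be$ as the representative of its pair), but it need not have an all-ones first column; this is harmless, since replacing a row by its complement changes neither the orthogonality nor the set $\{\text{rows}\}\cup\{\text{complements}\}=\pi(C)$, so $\pi(C)$ is indeed the Hadamard code associated with a normalized Hadamard matrix of order $2n$.
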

Hadamard conjecture asserts that an Hadamard matrix of order $4n$ exists for every positive integer $n$. 
In this paper we are mainly interested in circulant
Hadamard matrices.

\begin{definition}\label{def:circulant}
	A circulant Hadamard matrix of order $4n$ is a square matrix of the form
	\begin{equation}\label{eq:circ}
	H=\begin{pmatrix}
	a_1&a_2&\cdots & a_{4n}\\
	a_{4n}&a_1&\cdots &a_{4n-1}\\
	\cdots &\cdots &\cdots& \cdots \\
	a_2&a_3&\cdots &a_1
	\end{pmatrix}
	\end{equation}
	with $a_i \in \{-1, 1\}$ for all index $i\in \{1,2,\ldots,4n\}$, and $H H^T = 4nI$. 
	We call $g$ the binary generator polynomial of the circulant matrix $H$, where $g=g_1+g_2x+\ldots+g_{4n}x^{4n-1}\in \F[x]/x^{4n}-1$ and $g_i=0$ when $a_i=1$, $g_i=1$ when $a_i=-1$.
\end{definition}

No circulant Hadamard matrix of order larger
than 4 has ever been found, but the nonexistence is still a non proved result. It seems that the first time this result was stated as a conjecture was in \cite{rys}. The most important result about circulant Hadamard matrices is the following one, where the proof uses the algebraic number theory and no equivalent proof using only elementary commutative algebra is known (except for specific values of $n$, as for example in \cite{S2013}).

\begin{proposition}\cite{tur}\label{prop:circulant}
	Let $H$ be a circulant Hadamard matrix of order $4n$. Then $n$ is an odd square number.
\end{proposition}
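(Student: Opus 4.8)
The plan is to translate the orthogonality condition into a single identity in the group ring $\Z[x]/(x^{4n}-1)$ and then evaluate it at roots of unity. Writing $A(x)=\sum_{i=1}^{4n} a_i x^{i-1}$ for the integer polynomial carrying the signs of the first row, the relation $HH^T=4nI$ is exactly the statement that the periodic autocorrelation of $(a_1,\dots,a_{4n})$ vanishes off the diagonal, i.e.
\[
A(x)\,A(x^{-1})\equiv 4n \pmod{x^{4n}-1}.
\]
Evaluating at any $4n$-th root of unity $\zeta$ and using that $A$ has real coefficients (so $A(\zeta^{-1})=\overline{A(\zeta)}$) gives $|A(\zeta)|^2=4n$ for every such $\zeta$. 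Taking $\zeta=1$ forces $A(1)^2=4n$; since $A(1)=\sum_i a_i$ is a sum of $4n$ odd numbers it is even, say $A(1)=2m$, whence $n=m^2$. This already yields that $n$ is a perfect square, and it is the elementary half of the argument.

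It remains to prove that $m$ is odd. First I would record the difference-set reformulation: with $g$ the generator polynomial of Definition~\ref{def:circulant} and $G(x)=\sum_i g_i x^{i-1}$ its integer lift, one has $A(x)=U(x)-2G(x)$ where $U(x)=(x^{4n}-1)/(x-1)$. Since $U(\zeta)=0$ for every $\zeta\neq 1$, the identity above becomes $|G(\zeta)|^2=m^2$, i.e.\ $|\chi(D)|=m$ for every nontrivial character $\chi$ of $C_{4n}$, where $D=\Supp(g)$. Thus $D$ is a Menon--Hadamard difference set of order $m^2$ in the cyclic group, and the claim is equivalent to Turyn's theorem that such a set forces $m$ odd.

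For the oddness I would argue by contradiction, assuming $2\mid m$, and concentrate on the $2$-part. Let $2^k$ be the exact power of $2$ dividing $4n$ and let $\zeta=\zeta_{2^k}$ be a primitive $2^k$-th root of unity, working in the ring of integers $\Z[\zeta]\subset\Q(\zeta)$. Here the prime $2$ is totally ramified, $(2)=\lambda^{2^{k-1}}$ with $\lambda=(1-\zeta)$, so there is a unique prime above $2$ and complex conjugation fixes $\lambda$; that is, $2$ is self-conjugate. From $\chi(D)\,\overline{\chi(D)}=m^2$ and $v_\lambda(\overline{\chi(D)})=v_\lambda(\chi(D))$ one then gets $v_\lambda(\chi(D))=v_\lambda(m)$, so the exact $2$-power dividing $m$ already divides $\chi(D)$ in $\Z[\zeta]$. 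The main obstacle is the final descent: one must push this divisibility in the cyclotomic ring back, through character inversion, to a congruence on the integer multiplicities $|D\cap H|$ over the subgroups $H$ of $C_{4n}$, and show it is incompatible with $|D|$ and the defining equations unless $v_2(m)=0$. This step is precisely where the self-conjugacy of $2$ and the arithmetic of $\Z[\zeta]$ are indispensable, matching the remark that no proof by elementary commutative algebra is known; everything preceding it is elementary bookkeeping.
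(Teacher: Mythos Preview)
The paper does not prove this proposition: it is quoted from \cite{tur} with the explicit comment that the known argument uses algebraic number theory and that no elementary substitute is available. There is therefore no proof in the paper to compare against.

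Your square part is correct and is exactly the content of \Cref{lemm:square} (done there via $HJ$ and $H^TJ$ rather than via $A(1)$, but the two computations are the same).

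For the oddness you have written a correct roadmap rather than a proof, and you say so yourself. The self-conjugacy set-up is the right one: for a character $\chi$ of $2$-power order the prime $2$ is totally ramified in $\Q(\zeta_{2^k})$, complex conjugation fixes the unique prime above it, and hence $v_\lambda(\chi(D))=v_\lambda(m)$. But the substance of Turyn's theorem lies entirely in the step you label ``the main obstacle'': converting that $\lambda$-adic divisibility of a single character value into an integral contradiction on the set $D$ inside $C_{4n}$ (nowadays usually packaged via Ma's lemma or an inversion over the $2$-Sylow subgroup). Without that step carried out you have located where the argument must go but have not proved the proposition; in effect your proposal reproduces the paper's own stance --- the square part is elementary, the oddness is Turyn's and is genuinely deep.
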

\begin{definition}\cite{rbh}
	A binary code $C$ of length $n$ has a  propelinear structure if for each codeword
	$\bx\in C$ there exists $\pi_\bx\in \cS_n$ satisfying the following conditions:
	\begin{enumerate}
		\item For all $\by\in C$, $\bx+\pi_\bx(\by)\in C$,
		\item For all $\by\in C$, $\pi_\bx\pi_\by=\pi_\bz$, where $\bz = \bx +\pi_\bx(\by)$.
	\end{enumerate}
\end{definition}

For all $\bx\in C$ and for all $\by\in \F^n$, denote by $*$ the binary operation
such that $\bx*\by=\bx+\pi_\bx(\by)$. Then, $(C,*)$ is a group, which is not
abelian in general. Vector $\be$ is always a codeword and $\pi_\be$ is the
identity permutation. Hence, $\be$ is the identity element in $C$ and $\bx^{-1}=\pi_\bx^{-1}(\bx)$, for all $\bx\in C$~\cite{rbh}. We call $C$ a propelinear code if it has a propelinear structure.

We call $C$ a propelinear Hadamard code if $C$ is both, a propelinear code and an Hadamard code. We say that the group structure of $C$ is the type of $C$.

\begin{definition}
	An Hadamard full propelinear code ($\HFP$-code) is an Hadamard propelinear code $C$ such that for every $\ba\in C$, $\ba\notin \{\be,\bu\}$ the permutation $\pi_\ba$ has not any fixed coordinate and $\pi_\be=\pi_\bu=I$.
\end{definition}

In \cite{RS17} $\HFP$-codes were introduced and their equivalence with Hadamard groups was proven; on the other hand,  the equivalence of Hadamard groups, relative $(4n,2,4n,2n)$-difference sets in a group, and cocyclic Hadamard matrices, is already known (see \cite{hor,RS17} an references there).

Next lemma is well known, we will use it later.
\begin{lemma}\cite{bmrs}\label{prop:3.2}
	Let $(C,*)$ be a propelinear code of length $4n$.
	\begin{enumerate}[i)]
		\item  For $\bx\in C$ we have $\bx \in K(C)$ if and only if $\pi_\bx \in \Aut(C)$.
		\item The kernel $K(C)$ is a subgroup of $C$ and also a binary linear space.
		\item If $\bc\in C$ then $\pi_\bc\in Aut(K(C))$ and $\bc * K(C)=\bc+K(C)$.
	\end{enumerate}
\end{lemma}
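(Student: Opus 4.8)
The plan is to distill the two propelinear axioms into a single structural identity and then read off all three items from it. The first step is to prove that $\pi_\bx(C)=\bx+C$ for every $\bx\in C$. Since $(C,*)$ is a group, left translation by $\bx$ permutes $C$, so $\bx*C=C$; writing out $\bx*\by=\bx+\pi_\bx(\by)$ gives $\bx+\pi_\bx(C)=C$, and adding $\bx$ (over $\F$) yields $\pi_\bx(C)=\bx+C$. Part i) is then instantaneous: $\bx\in K(C)$ says $\bx+C=C$, which by this identity is exactly $\pi_\bx(C)=C$, i.e.\ $\pi_\bx\in\Aut(C)$, and the equivalence holds in both directions at once.

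For part ii), the second axiom says $\pi_{\bx*\by}=\pi_\bx\pi_\by$, so $\phi\colon\bx\mapsto\pi_\bx$ is a group homomorphism $(C,*)\to\cS_n$. By part i) we have $K(C)=\phi^{-1}(\Aut(C))$; since $\Aut(C)$ is a subgroup of $\cS_n$, its preimage under the homomorphism $\phi$ is a subgroup of $(C,*)$, and $K(C)\subseteq C$ because $\bkappa=\bkappa+\be\in\bkappa+C=C$. That $K(C)$ is a binary linear space is the routine closure check $(\bkappa_1+\bkappa_2)+C=\bkappa_1+(\bkappa_2+C)=C$ together with $\be\in K(C)$ (scalars are trivial over $\F$).

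The substance is part iii), and I would reduce it to the single claim $\pi_\bc(K(C))=K(C)$, since then $\bc*K(C)=\bc+\pi_\bc(K(C))=\bc+K(C)$ follows immediately. Fixing $\bkappa\in K(C)$, I would verify $\pi_\bc(\bkappa)+C=C$ directly, exploiting that the coordinate permutation $\pi_\bc$ acts $\F$-linearly on $\F^n$. From $\phi(\be)=I$ one gets $\pi_\bc^{-1}=\pi_{\bc^{-1}}$, so the identity of the first step applied to $\bc^{-1}$ gives $\pi_\bc^{-1}(C)=\bc^{-1}+C$; then
\[
\pi_\bc(\bkappa)+C=\pi_\bc\bigl(\bkappa+\pi_\bc^{-1}(C)\bigr)=\pi_\bc\bigl(\bkappa+\bc^{-1}+C\bigr)=\pi_\bc\bigl(\bc^{-1}+C\bigr)=C,
\]
where the crucial cancellation $\bkappa+C=C$ is the one place $\bkappa\in K(C)$ enters. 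Hence $\pi_\bc(K(C))\subseteq K(C)$, and equality follows by finiteness, giving $\pi_\bc\in\Aut(K(C))$.

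The main obstacle, and the point one must handle with care, is that $\pi_\bc$ itself generally does \emph{not} preserve $C$ (it does so precisely when $\bc\in K(C)$, by part i), so one cannot push $\pi_\bc$ through $C$ naively. The argument instead routes through the permuted copy $\pi_\bc^{-1}(C)=\bc^{-1}+C$ supplied by the first-step identity and then uses the $\F$-linearity of $\pi_\bc$ to relocate the kernel cancellation to the one spot where $\bkappa+C=C$ is legitimately available. Once the identity $\pi_\bx(C)=\bx+C$ and the homomorphism property are in hand, every remaining step is a short manipulation.
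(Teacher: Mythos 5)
Your proof is correct, and there is nothing in the paper to compare it against: the paper states this lemma without proof, quoting it from the reference \cite{bmrs}. Your argument is the standard one — the identity $\pi_\bx(C)=\bx+C$, extracted from $\bx * C = C$, drives all three parts, and your careful handling of part iii) (routing through $\pi_\bc^{-1}(C)=\bc^{-1}+C$ via $\pi_{\bc^{-1}}=\pi_\bc^{-1}$ rather than illegitimately pushing $\pi_\bc$ through $C$, then closing $\pi_\bc(K(C))\subseteq K(C)$ to equality by finiteness) is sound and complete.
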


The unique (up to equivalence) circulant Hadamard matrix of order 4 can be presented as:
\begin{equation}\label{eq:circ1}
\begin{pmatrix}
1&0&0&0\\0&1&0&0\\0&0&1&0\\0&0&0&1
\end{pmatrix}
\end{equation}

or, after normalization, as:
\begin{equation}\label{eq:circ2}
\begin{pmatrix}
0&0&0&0\\1&1&0&0\\1&0&1&0\\1&0&0&1
\end{pmatrix}.
\end{equation}
Now, taking $\ba=(1100)$ with associated permutation $\pi_{\ba}=(1,2,3,4)$ we see that $C=\langle \ba, \bu \rangle$ is an $\HFP$-code of type $C_4 \times C_{2\bu}$, where $C_{2\bu}$ means a cyclic group of order two containing $\bu$. It is also easy to see that taking $\ba=(1100)$  and  $\bb=(1010)$, with associated permutations $\pi_{\ba}=(1,2)(3,4)$ and $\pi_{\bb}=(1,3)(2,4)$, respectively, we obtain an $\HFP$-code $C=\langle \ba, \bb, \bu \rangle$ of type $C_2 \times C_2 \times C_{2\bu}$.

In this paper we prove that a circulant Hadamard code can always be seen as an $\HFP$-code of type $C_{4n}\times C_{2\bu}$ and vice versa, an $\HFP$-code of the above type always gives a circulant Hadamard code. 

\section{Circulant Hadamard full propelinear codes}\label{sec2}

A circulant $4n\times 4n$ Hadamard matrix (\Cref{eq:circ}) is a matrix $H$ with rows given, in polynomial way, by $g,xg,\ldots, x^{4n-1}g\in \F[x]/x^{4n}-1$, where $g\in \F[x]/x^{4n}-1$.  A circulant Hadamard code $C$ (with $\be\in C$) is given by the vectors corresponding to the coefficients of polynomials in $\{g+x^ig, \bu+g+x^ig: 1\leq i \leq 4n\}$, where $\bu$ is, in a polynomial way, $\bu=1+x+x^2+\ldots +x^{4n-1}$.

Next lemma showing that $n$ is a square is well known, but we include the proof here since we need the second result of it.
\begin{lemma}
	\label{lemm:square}
	Let $H$ be a circulant  $4n\times 4n$ Hadamard matrix. Then, $n$ is a square and the weight of each column in the corresponding binary matrix is $2n\pm \sqrt{n}$.
\end{lemma}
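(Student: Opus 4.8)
The plan is to work with the row-sum and inner-product relations that define a circulant Hadamard matrix, passing between the $\{\pm1\}$ presentation and the binary weight. Let $w$ denote the weight of the generator polynomial $g$, i.e. the number of $-1$ entries in the first row of $H$. Since $H$ is circulant, every row and every column has exactly $w$ entries equal to $-1$ and $4n-w$ entries equal to $+1$. The first step is to compute the row sum $S$ of $H$ in the $\{\pm1\}$ presentation: each row contributes $S=(4n-w)-w=4n-2w$. Because $H$ is circulant the all-ones vector $\mathbf{j}$ is an eigenvector-like object, and I would exploit $HH^T=4nI$ evaluated against $\mathbf{j}$.

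**Next I would** use the global identity coming from orthogonality. Multiplying $HH^T=4nI$ on the left and right by $\mathbf{j}$ gives $\mathbf{j}H H^T\mathbf{j}^T=4n\,\mathbf{j}\mathbf{j}^T=4n\cdot 4n=16n^2$. On the other hand, $H^T\mathbf{j}^T$ is the vector of column sums, and since $H$ is circulant each column sum equals the common value $S=4n-2w$; hence $H^T\mathbf{j}^T=S\,\mathbf{j}^T$, so $\mathbf{j}HH^T\mathbf{j}^T=S\cdot(\mathbf{j}H)\mathbf{j}^T=S\cdot S\,\mathbf{j}\mathbf{j}^T=S^2\cdot 4n$. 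Comparing the two evaluations yields $S^2\cdot 4n=16n^2$, so $S^2=4n$ and therefore $S=\pm 2\sqrt{n}$. For $S$ to be an integer, $n$ must be a perfect square; this is the first assertion of the lemma. (This recovers the weaker fact that $n$ is a square, without invoking \Cref{prop:circulant}.)

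**Then I would** translate the row/column sum back into binary weight. The weight of a column in the binary matrix is precisely the number of $-1$'s in that column, namely $w$. From $S=4n-2w=\pm 2\sqrt{n}$ we solve $w=2n\mp\sqrt{n}$, which is exactly the claimed value $2n\pm\sqrt{n}$ (the sign depending on the sign of $S$). Since all columns of a circulant matrix are cyclic shifts of one another, they all share this common weight, so the statement about \emph{each} column holds uniformly.

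**The main obstacle** is being careful about which quantity is being counted where: keeping straight the bijection between the $\{+1,-1\}$ entries of $H$ and the $\{0,1\}$ entries of the binary matrix $H'$ (with $+1\mapsto 0$ and $-1\mapsto 1$ as fixed in \Cref{def:circulant}), and confirming that the column sum of a circulant matrix is indeed constant and equal to the row sum. The eigenvector argument against $\mathbf{j}$ is clean precisely because circulancy forces $\mathbf{j}$ to be a common eigenvector of $H$ and $H^T$; the only genuinely delicate point is justifying $S^2=4n$ rather than merely $\lvert S\rvert\le 2\sqrt{n}$, and this is exactly where the square structure of $HH^T=4nI$ (as opposed to a mere bound) is used.
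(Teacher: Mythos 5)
Your proof is correct and follows essentially the same route as the paper: both exploit circulancy to get a constant row/column sum, then evaluate $HH^T=4nI$ against the all-ones object (you use the vector $\mathbf{j}$ on both sides, the paper uses the all-ones matrix $J$) to conclude that the sum squared equals $4n$, whence $n$ is a square and the column weight is $2n\pm\sqrt{n}$. The two computations are the same argument in slightly different notation, so there is nothing to fix.
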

\begin{proof}
	Let $g\in  \F[x]/x^{4n}-1$ be the binary generator polynomial of the circulant Hadamard matrix $H$.	Assume that polynomial $g$ has weight $2n+ \sigma$ (note that we are talking about the weight of the vector given by the coefficients of the polynomial). Let $J$ be the matrix with all entries equal to one. Compute $H^TJ$ which gives $(-2\sigma)J$. Also $HJ=(-2\sigma)J$. Hence $4nIJ=HH^TJ=(-2\sigma)^2J$, so $4n=4\sigma^2$ and $n$ is a square. Further, $\sigma=\pm \sqrt{n}$ and so, the weight of each column in $H$, considered as a binary matrix, is $2n\pm \sqrt{n}$.
\end{proof}

Now, our interest is to show that a circulant Hadamard code $C$ always has a full propelinear structure. 

\begin{proposition}\label{prop:str}
	Let $H$ be a circulant Hadamard code of order $4n$, with binary generator polynomial $g$. Code $C=\{g+x^ig+\xi\u \,|\, 0\leq i \leq n-1; \xi \in \F\}$, where $\u$ is the polynomial with all coefficients one,  equipped with the operation $*$ defined by
	\begin{equation}\label{eq:str}
	\begin{split}
	(g+x^ig+\xi\u)*(g+x^jg+\xi'\u)=g+x^{i+j}g+(\xi+ \xi')\u \\
	\mbox{for any $i,j\in \{0,1,\ldots,4n-1\}$ and  $\xi,\xi'\in \F$},
	\end{split}
	\end{equation}
	is an $\HFP$-code with a cyclic group of permutations $\Pi$ of order $4n$.
\end{proposition}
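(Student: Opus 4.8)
The plan is to exhibit an explicit propelinear structure in which the permutation attached to each codeword is a power of the cyclic shift, and then to read off the group type, the fullness, and the cyclicity of $\Pi$ directly from the arithmetic of $\Z_{4n}\times\Z_2$. I would start by letting $\sigma\in\cS_{4n}$ be the cyclic shift acting on $\F[x]/(x^{4n}-1)$ as multiplication by $x$ (so position $k$ goes to position $k+1\pmod{4n}$), and setting $\pi_{g+x^ig+\xi\u}:=\sigma^i$ for $0\le i\le 4n-1$ and $\xi\in\F$. The first thing to secure is that this assignment, and the operation $*$ of \eqref{eq:str}, are well defined, i.e. that the $8n$ pairs $(i,\xi)$ index $8n$ distinct codewords so that the shift exponent $i$ is determined by the codeword. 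This is exactly where the Hadamard hypothesis enters: for $i\neq j$ the rows $x^ig$ and $x^jg$ of $H$ differ in precisely $2n$ positions, so $x^ig+x^jg$ has weight $0$ or $2n$ and can never equal $\u$ (of weight $4n$); hence $g+x^ig+\xi\u=g+x^jg+\xi'\u$ forces first $\xi=\xi'$ and then $i=j$. I expect this injectivity check to be the only genuinely delicate point of the argument.

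With $\pi$ well defined, I would verify the two propelinear axioms by a direct computation resting on two elementary facts, $\sigma^i(\u)=\u$ (the all-ones vector is fixed by every coordinate permutation) and $x^ig+x^ig=\be$. For $\bx=g+x^ig+\xi\u$ and $\by=g+x^jg+\xi'\u$ one obtains $\pi_\bx(\by)=x^ig+x^{i+j}g+\xi'\u$, and therefore
$$\bx+\pi_\bx(\by)=g+x^{i+j}g+(\xi+\xi')\u,$$
which lies in $C$ and coincides with the right-hand side of \eqref{eq:str}. This single identity establishes the first axiom (closure), identifies $\bz=g+x^{i+j}g+(\xi+\xi')\u$, and gives the second axiom at once, since $\pi_\bx\pi_\by=\sigma^i\sigma^j=\sigma^{i+j}=\pi_\bz$. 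Hence $*$ is precisely the operation $\bx+\pi_\bx(\by)$ induced by the chosen permutations, and $C$ is a propelinear Hadamard code.

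Finally I would collect the structural consequences. The bijection $(i,\xi)\mapsto g+x^ig+\xi\u$ carries $*$ to coordinatewise addition on $\Z_{4n}\times\Z_2$, so $(C,*)\cong C_{4n}\times C_{2\bu}$, the factor $C_{2\bu}$ being generated by $\bu=\u$ (the case $i=0$, $\xi=1$). For fullness, both $\be$ (the case $i=\xi=0$) and $\bu$ correspond to $i=0$, whence $\pi_\be=\pi_\bu=\sigma^0=I$; every remaining codeword has $i\not\equiv 0\pmod{4n}$, and for such $i$ the shift $\sigma^i$ has no fixed coordinate, because $k+i\equiv k\pmod{4n}$ would force $4n\mid i$. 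Thus the structure is full. Since the set of permutations is $\Pi=\{\sigma^i:0\le i\le 4n-1\}=\langle\sigma\rangle$ and $\sigma$ is a single $4n$-cycle, $\Pi$ is cyclic of order $4n$, which completes the proof.
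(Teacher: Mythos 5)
Your proof is correct and follows essentially the same route as the paper: assign $\pi_{g+x^ig+\xi\u}=\sigma^i$ (the cyclic shift to the power $i$), establish both propelinear axioms by the single computation $\bx+\pi_\bx(\by)=g+x^{i+j}g+(\xi+\xi')\u$, and read off fullness and the cyclic group $\Pi=\langle\sigma\rangle$ of order $4n$. The only notable difference is one of bookkeeping: you add an explicit well-definedness check (that the pair $(i,\xi)$ is determined by the codeword, so $\pi$ is a legitimate assignment), a point the paper glosses over, whereas the paper devotes a separate final step to verifying the Hadamard property via $\wt(\ba^i)=d(\ba^i,\ba^j)=2n$ --- a fact your injectivity paragraph already contains implicitly.
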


\begin{proof}
	First, for all $i\in \{0\ldots 4n-1\}$ take the permutation associated to the element $z=g+x^ig+\xi\u$ as $\pi_z=x^i$, where we use $x^i$ meaning a cyclic shift of $i$ positions. Indeed, with these permutations we obtain
	$(g+x^ig+\xi\u)*(g+x^jg+\xi^\prime \u)= g+x^ig+\xi\u+x^i(g+x^jg+\xi^\prime\u) = g+ x^{i+j}g +(\xi+\xi^\prime)\u$. Now, it is clear that for all $a,b\in C$ we have $a*b\in C$.
	
	On the other hand,
	for all $a=g+x^ig+\xi\u$,
	$b=g+x^jg+\xi^\prime\u\in C$
	we have $a*b=g+x^{i+j}g+(\xi+\xi^\prime)\u$ and so, $\pi_{a*b}=\pi_a\pi_b$.
	Hence, code $C$ is propelinear and, further, is full propelinear since all the associated permutations $\pi_a$ have no any fixed point, with the exception of $a\in\{\be,\bu\}$.
	
	Finally, we note that $\Pi$ is a cyclic group of order $4n$ and the propelinear structure of $C$ is given by $C_{4n}\times C_2$, where $\bu\in C_2$. The elements in the propelinear code are $\ba, \ba^2, \ldots, \ba^{4n-1}$ and the complements where $\ba^i$, in polynomial way, is $(1+x^i)g$.
	
	The constructed propelinear code is Hadamard. Indeed, we know that the initial circulant matrix $H$ is Hadamard and so the weight of each element $\ba^i$ is $\wt(\ba^i)=d(g,x^ig)=2n$. Also, for $i\not= j$ we have $d(\ba^i, \ba^j)= \wt(\ba^i+\ba^j)=\wt(x^i(g+x^{j-i}g))=\wt(\ba^{j-i})=2n$ .
\end{proof}

Vice versa, next proposition shows that an $\HFP$-code of type $C_{4n} \times C_{2\bu}$, or the same, an $\HFP$-code with a cyclic group of permutations of order $4n$, gives a circulant Hadamard code.

\begin{proposition}\label{prop:str1}
	Let $C$ be an $\HFP$-code of length $4n$ with a cyclic group of permutations $\Pi$ of order $4n$, so an $\HFP$-code of type $C_{4n}\times C_{2\bu}$. Then, $C$ is Hadamard circulant code of length $4n$.
\end{proposition}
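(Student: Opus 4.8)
The plan is to establish the converse of \Cref{prop:str}. Since we have an $\HFP$-code $C$ whose permutation group $\Pi$ is cyclic of order $4n$, I would fix a generator $\pi$ of $\Pi$ and let $\ba\in C$ be a codeword with $\pi_\ba=\pi$. The key structural observation is that $\pi$ generates a cyclic group acting on the $4n$ coordinates, and since $C$ is \emph{full} propelinear, $\pi=\pi_\ba$ has no fixed coordinate for $\ba\notin\{\be,\bu\}$. First I would argue that a fixed-point-free permutation of order $4n$ acting on $4n$ points must be a single $4n$-cycle: a cyclic group generated by a single permutation decomposes the coordinate set into orbits all of whose sizes divide $4n$; fixed-point-freeness rules out orbits of size one, but to force a single cycle of full length I would use that the \emph{order} of $\pi$ equals $4n$, which is the lcm of the orbit lengths, together with a counting argument that the orbits partition exactly $4n$ points. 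After relabelling coordinates, I may therefore assume $\pi=(1,2,\ldots,4n)$, i.e.\ $\pi$ is the cyclic shift $x$ in the polynomial ring $\F[x]/(x^{4n}-1)$.

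Next I would translate the propelinear structure into polynomial language. Write each codeword as a polynomial in $\F[x]/(x^{4n}-1)$. The element $\ba$ corresponds to some polynomial, and the propelinear axioms give $\ba^i = \ba * \ba * \cdots * \ba$ ($i$ times) with associated permutation $\pi_{\ba^i}=\pi^i=x^i$. Using axiom (1), $\ba*\ba = \ba + \pi_\ba(\ba) = \ba + x\ba$; more generally I would show by induction that $\ba^i$, in polynomial form, equals $(1+x+\cdots+x^{i-1})\ba$ or equivalently that the codewords generated are of the form $g+x^i g$ for a suitable generator polynomial $g$. Concretely, I would set $g$ to be the polynomial realizing $\ba$ in the first coordinate frame and verify that $C$ consists exactly of $\{g+x^ig+\xi\bu\}$, matching the description preceding \Cref{prop:str}.

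The main obstacle, and the step deserving the most care, is showing that the matrix whose rows are the shifts $x^i g$ (equivalently, the $\{-1,+1\}$ matrix obtained from $g$) is genuinely Hadamard, i.e.\ that its rows are pairwise orthogonal. Here I would use that $C$ is an Hadamard code: any two distinct non-complementary codewords are at Hamming distance $2n$, so $d(x^ig, x^jg)=2n$ for $i\neq j$. Since $d(x^ig,x^jg)=\wt(x^i g + x^j g)=\wt(x^i(1+x^{j-i})g)=\wt((1+x^{j-i})g)$, the constant distance $2n$ between all distinct shifts is precisely the statement that distinct rows of the circulant matrix agree in $2n$ places and differ in $2n$ places, which is equivalent to orthogonality of the corresponding $\pm1$ rows. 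Assembling these, the circulant matrix built from $g$ satisfies $HH^T=4nI$, so $C$ is a circulant Hadamard code of length $4n$, as claimed.

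Finally I would confirm the bookkeeping: the code has the correct size ($8n$ codewords, namely the $4n$ shifts together with their complements via $\bu$), contains $\be$ and $\bu$, and that the identification of $\Pi$ with the cyclic shift group makes $H$ a matrix of the circulant form in \Cref{eq:circ}. The one point where I expect to spend effort is rigorously pinning down that $\pi$ is a full $4n$-cycle rather than a product of several disjoint cycles whose lengths merely have lcm $4n$; the fixed-point-freeness together with the full-propelinear hypothesis should be leveraged carefully, possibly invoking that \emph{every} non-trivial codeword's permutation is a power of $\pi$ and is also fixed-point-free, which strongly constrains the cycle type.
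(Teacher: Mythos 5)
Your overall route is the same as the paper's: pick a generator $\ba$ with $\pi_\ba$ generating $\Pi$, write $\ba^i=\ba+\pi_\ba(\ba)+\cdots+\pi_\ba^{i-1}(\ba)$, pass to $\F[x]/(x^{4n}-1)$, and verify the resulting circulant matrix is Hadamard from the constant distance $2n$ in the code. But there is one genuine gap, and it sits exactly at the step the paper's proof turns on: the choice of the generator polynomial $g$. You propose ``concretely'' to take $g$ to be the polynomial $a(x)$ realizing $\ba$ itself. With that choice the argument fails: the codewords are $\ba^i=(1+x+\cdots+x^{i-1})a(x)$, which is \emph{not} $(1+x^i)a(x)$; indeed $(1+x^{j-i})a=(1+x)\,\ba^{j-i}$ need not be a codeword of weight $2n$, so your distance computation $d(x^ia,x^ja)=\wt((1+x^{j-i})a)$ has no Hadamard-code fact to appeal to. The failure is real, not just a gap in justification: for the length-$4$ code generated by $\ba=(1100)$ with $\pi_\ba=(1,2,3,4)$ one has $a(x)=1+x$, and the rows $x^ia$ are $(1100),(0110),(0011),(1001)$, where the first and third rows are at distance $4$, not $2$ --- that circulant matrix is not Hadamard. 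The missing idea is the paper's: since $C$ is Hadamard and $\ba\notin\{\be,\bu\}$, the weight of $\ba$ is $2n$, which is even, hence $(1+x)$ divides $a(x)$, and one must set $g=a(x)/(1+x)$. Then $\ba^i=(1+x+\cdots+x^{i-1})a(x)=\frac{1+x^i}{1+x}a(x)=(1+x^i)g=g+x^ig$, the rows of the circulant matrix are $x^ig$, and your computation $d(x^ig,x^jg)=\wt((1+x^{j-i})g)=\wt(\ba^{j-i})=2n$ then goes through verbatim. Your hedge ``for a suitable generator polynomial $g$'' gestures at this, but the divisibility-by-$(1+x)$ observation is the entire content of that step and cannot be skipped.

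A secondary point: your opening claim that a fixed-point-free permutation of order $4n$ on $4n$ points must be a $4n$-cycle is false as stated (cycle type $(2,3,3,4)$ on $12$ points has order $12$ and no fixed point). The repair you sketch at the end is the correct one and should be the actual argument: every $\pi^\ell=\pi_{\ba^\ell}$ with $0<\ell<4n$ is fixed-point-free, because $\ba^\ell\neq\be$ ($\ba$ has order $4n$) and $\ba^\ell\neq\bu$ (the type is $C_{4n}\times C_{2\bu}$, so $\bu\notin\langle\ba\rangle$), whereas a cycle of length $\ell<4n$ in $\pi$ would force $\pi^\ell\neq I$ to fix a point. On this point you are in fact more careful than the paper, whose proof of \Cref{prop:str1} identifies $\pi_\ba$ with the shift $x$ without comment.
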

\begin{proof}
	Let $\ba\in C$ be a generator of $C_{4n}$. The elements in the propelinear code are $\ba, \ba^2, \ldots, \ba^{4n-1}$, and the complements. As the weight of $\ba$ is $2n$ the associated polynomial $a(x)$ is divisible by $(1+x)$. Take as $g$ the quotient polynomial after dividing $a(x)$ by $(1+x)$. Note that the element $\ba^i$ can be written as $\ba^i = \ba+\pi_{\ba}(\ba)+\pi_{\ba}^2(\ba)+\ldots + \pi_{\ba}^{i-1}(\ba)$ and, in polynomial way, $(1+x+x^2+\ldots +x^{i-1})a(x)=\frac{1+x^i}{1+x}a(x)=(1+x^i)g$. The matrix given by the rows $x^ig$ is a circulant Hadamard matrix.
\end{proof}
The unique circulant Hadamard code we know is given by the rows of matrix in \Cref{eq:circ2} and their complements, hence the code is linear and so the values of the rank and dimension of the kernel are $r=k=3$.

\begin{lemma}\label{lemm:2.4}
Let $C$ be an $\HFP$-code of type $C_{4n}\times C_{2\bu}$, where $C=\langle \ba,\bu \rangle$. Vector $\bu$ belongs to the linear span of $\langle \ba \rangle$.
\end{lemma}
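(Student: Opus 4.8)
The plan is to exhibit $\bu$ explicitly as the $\F_2$-sum of all the elements of the cyclic group $\langle \ba\rangle$. First I would recall the polynomial description of the code. By \Cref{prop:str1} the code $C$ is a circulant Hadamard code; writing $g$ for its binary generator polynomial in $\F[x]/x^{4n}-1$, the elements of $\langle \ba\rangle$ are exactly $\ba^i=(1+x^i)g$ for $0\leq i\leq 4n-1$, as recorded in the proof of \Cref{prop:str}. Since the linear span of $\langle\ba\rangle$ is closed under $\F_2$-addition, it contains in particular the total sum $\sum_{i=0}^{4n-1}\ba^i$, and the whole argument reduces to identifying this sum.

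Next I would compute that total sum as a polynomial:
\[
\sum_{i=0}^{4n-1}\ba^i=\Big(\sum_{i=0}^{4n-1}(1+x^i)\Big)g=\Big(4n\cdot 1+\sum_{i=0}^{4n-1}x^i\Big)g=\bu\,g,
\]
where the $4n$ copies of the constant term cancel over $\F$ because $4n$ is even, and $\sum_{i=0}^{4n-1}x^i=\bu$ by definition of $\bu$. Note the summand for $i=0$ is $(1+1)g=\zero$, so it harmlessly contributes nothing.

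The key step is then to evaluate $\bu\,g$. Using that $x^j\bu\equiv\bu$ in $\F[x]/x^{4n}-1$ for every $j$ (a cyclic shift merely permutes the $4n$ monomials of $\bu$ among themselves), one obtains $\bu\,g=\big(\sum_j g_j\big)\bu=\wt(g)\,\bu$ over $\F$, so everything comes down to the parity of $\wt(g)$. By \Cref{lemm:square} we have $\wt(g)=2n\pm\sqrt{n}$, and by \Cref{prop:circulant} the integer $n$ is an odd square, so $\sqrt{n}$ is odd and hence $\wt(g)$ is odd. Therefore $\bu\,g=\bu$, which gives $\bu=\sum_{i=0}^{4n-1}\ba^i$ and places $\bu$ in the linear span of $\langle\ba\rangle$.

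The main obstacle is precisely this parity computation. The identity $\bu\,g=\wt(g)\,\bu$ is elementary, but concluding that $\wt(g)$ is odd genuinely depends on $n$ being an \emph{odd} square. Were $n$ an even square, $\wt(g)$ would be even, the total sum would collapse to $\zero$, and the argument would give no information; thus the strong arithmetic input of \Cref{prop:circulant} is exactly what makes the clean summation work.
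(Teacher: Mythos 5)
Your proposal is correct and takes essentially the same route as the paper: the paper likewise sums the powers of $\ba$, identifies the result (using that $4n-1$ is odd) with the mod-2 sum of all rows of the binary circulant matrix, and invokes \Cref{lemm:square} to see that every column weight is $2n\pm\sqrt{n}$. The only difference is that you spell out, via \Cref{prop:circulant}, the parity step---$\sqrt{n}$ is odd, hence $\wt(g)=2n\pm\sqrt{n}$ is odd---which the paper's proof uses implicitly when it concludes that the row sum is $\bu$.
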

\begin{proof}
Adding the $4n-1$ vectors $\ba,\ba^2,\ldots, \ba^{4n-1}$, we obtain in polynomial form (see the proof of \Cref{prop:str1}) $\sum_{i=1}^{4n-1} (1+x^i)g$ and since $4n-1$ is odd the above addition coincides with the addition of all rows in the binary circulant matrix. From \Cref{lemm:square} the number of ones in each column is either $2n+ \sqrt{n}$ or $2n- \sqrt{n}$, so we obtain the all ones vector $\bu$. This proves the statement.
\end{proof}

Next proposition characterizes the values of rank and dimension of the kernel for circulant Hadamard codes. The proof uses the strong result given in \Cref{prop:circulant}.

\begin{proposition}\label{prop:rank}
	Let $C$ be a non linear circulant Hadamard code of length $4n$. Then the dimension of the kernel is $k=1$ and the rank is $r=\Rank(C)=4n-1$.
\end{proposition}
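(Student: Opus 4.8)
The plan is to combine the strong arithmetic constraint of \Cref{prop:circulant} with the two ``well-known'' lemmas on Hadamard codes, supplying only one short direct computation for the upper bound on the rank. Everything reduces to first pinning down the $2$-adic shape of the length $4n$.

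First I would record that shape. Since $C$ is a circulant Hadamard code, \Cref{prop:circulant} forces $n$ to be an odd square, and because $C$ is assumed non-linear we also have $n>1$. Writing $4n=2^sn'$ with $n'$ odd, the oddness of $n$ gives $n'=n$ and $s=2$. This single observation is what makes both lemmas applicable.

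Next comes the kernel. With $s=2$, \Cref{odd} immediately yields $1\le k\le s-1=1$, so $k=1$ and the kernel is trivial. As a robustness check one can instead argue through \Cref{proj}: if the kernel were non-trivial, any $\bkappa\in K(C)\setminus\{\be,\bu\}$ would produce, by projection onto $\Supp(\bkappa)$, a Hadamard code of length $2n$; but $2n\equiv 2\pmod 4$ with $2n>2$, and no Hadamard matrix of such order exists, a contradiction. Either route gives $K(C)=\{\be,\bu\}$, hence $k=1$.

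For the rank I would establish the two matching bounds $r\ge 4n-1$ and $r\le 4n-1$. The lower bound is exactly \Cref{aske}(ii), which applies precisely because $s=2$. For the upper bound I use the explicit description from \Cref{prop:str}: every codeword has the form $g+x^ig+\xi\u$, and by the Hadamard property $\wt(g+x^ig)\in\{0,2n\}$, so after possibly adding $\u$ (of weight $4n$) each codeword has weight in $\{0,2n,4n\}$, all even. Hence $C$ lies in the even-weight subspace of $\F^{4n}$, which has dimension $4n-1$, giving $r\le 4n-1$. Combining the bounds yields $r=4n-1$. The only genuinely new ingredient is this even-weight observation; the rest is bookkeeping feeding the hypotheses of \Cref{prop:circulant}, \Cref{odd}, and \Cref{aske}. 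I do not expect a serious obstacle, the one delicate point being to ensure that the identification $s=2$ (hence the applicability of \Cref{odd} and of \Cref{aske}(ii)) genuinely rests on $n$ being odd, which is exactly what \Cref{prop:circulant} supplies.
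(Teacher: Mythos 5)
Your proof is correct, and on two of the three points it coincides with the paper's: both you and the author deduce $s=2$ from \Cref{prop:circulant}, then get $k=1$ from \Cref{odd} and $r\geq 4n-1$ from \Cref{aske}(ii). Where you genuinely diverge is the upper bound $r\leq 4n-1$. The paper proves it via \Cref{lemm:2.4}: summing the $4n-1$ vectors $\ba,\ba^2,\ldots,\ba^{4n-1}$ and using the column-weight count $2n\pm\sqrt{n}$ from \Cref{lemm:square}, it shows that $\bu$ already lies in the linear span of $\langle\ba\rangle$, so the span of $C$ is generated by those $4n-1$ vectors. You instead observe that every codeword has weight $0$, $2n$ or $4n$, all even, so the span of $C$ sits inside the even-weight hyperplane of $\F^{4n}$, which has dimension $4n-1$. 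Your argument is more elementary and more general: it needs neither the circulant structure nor \Cref{lemm:square}, and in fact bounds the rank of \emph{any} Hadamard code of any length by $4n-1$. What the paper's heavier route buys is the sharper structural fact $\bu\in\langle\ba,\ba^2,\ldots,\ba^{4n-1}\rangle$, which is reused later in \Cref{rem:2} to write $r=4n-1-\Degree(\Gcd(g,x^{4n}+1))$; your parity bound could not substitute for it there. Finally, your alternative kernel argument via \Cref{proj} is also sound -- the projection would be a Hadamard code of length $2n\equiv 2\pmod 4$ with $2n>2$, which cannot exist -- though note it silently invokes the standard fact, not stated in the paper, that Hadamard matrices of order larger than $2$ have order divisible by $4$.
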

\begin{proof}
	From \Cref{odd} if $C$ is an Hadamard code of length $4n=2^sn'$, with odd $n'$ then the dimension of the kernel is $1\leq k\leq s-1$. Hence in our case, from \Cref{prop:circulant}, as $n$ is odd and $s=2$ we have $k=1$.
	
	For the rank, from \Cref{prop:circulant} we have that $n$ is odd and from \Cref{aske}  we have  $r\geq 4n-1$. From \Cref{lemm:2.4} vector $\bu$ is a linear combination of $\ba,\ba^2,\ldots, \ba^{4n-1}$, so $r\leq 4n-1$. This proves the statement.
\end{proof} 

Without using the result in \Cref{prop:circulant} we can give an alternative proof about the dimension of the kernel as we see in the following \Cref{rem:1}. However, the computation of rank in \Cref{prop:rank} is absolutely tied to the condition about parity of $n$. 
Nevertheless, for the special case when $n$ ($n\not= 1$) is a power of two, we can obtain the nonexistence of circulant codes as we prove in \Cref{rem:2}.

\begin{remark}\label{rem:1}
	Let $C$ be a non linear circulant Hadamard code of length $4n=2^sn'$, with $n'$ odd. Then the dimension of the kernel is $k=1$.
\end{remark}
\begin{proof}
	Indeed, assume that the dimension of the kernel is $k\not=1$ and let $\bv$ be a vector in the kernel of $C$ different form $\be$ and $\bu$. Take $\bc=(101010\ldots10)$. For any binary vector $\bx$ of length $4n$ say that $\bx^{(1)}, \bx^{(2)}$ are the projections over the first and the second half part of $\bx$, respectively.
	
	First of all, we want to prove that $\ba^{2n}=\bc\in K(C)$.
	Let  $\bv$ a vector in the kernel of $C$ different from $\be$ and $\bu$. If $\bv^{(1)}=\bv^{(2)}$  take $\bkappa_1=\bv$; if $\bv^{(1)}=\bv^{(2)}+\bu$  take $\bkappa_1=\bv+\pi_{\ba^n}(\bv)$; otherwise take $\bkappa_1=\bv+\pi_{\ba^{2n}}(\bv)$, which from \Cref{prop:3.2} belongs to the kernel and $\bkappa_1^{(1)}=\bkappa_1^{(2)}$.
	In any case, $\bkappa_1\notin \{\be,\bu\}$ and so $\bkappa_1+\pi_{\ba}(\bkappa_1)\not=\be$. If $\bkappa_1+\pi_{\ba}(\bkappa_1)=\bu$ then $\bkappa_1=\bc$. 
	In this last case $\bkappa_1$ is of order a power of two and so $\bkappa_1=\ba^{2^tn'}$, for $2\leq t\leq s-1$. Computing $\bkappa_1^{2}$ we obtain $\be$ and so $\bkappa_1=\ba^{2n}$. If we are not in the above case we can use the same argumentation as before using $\bkappa_1^{(1)}$ instead of $\bv$. So, if $\bkappa_1^{{(1)}{(1)}}=\bkappa_1^{{(1)}{(2)}}$ then take $\bkappa_2=\bkappa_1$, otherwise take $\bkappa_2=\bkappa_1+\pi_{\ba^{n}}(\bkappa_1)$ which belongs to the kernel and $\bkappa_2^{{(1)}{(1)}}=\bkappa_2^{{(1)}{(2)}}$. We can repeat the same construction until we can get a vector in the kernel which can be divided in $2^{s-1}$ parts which are exactly equal. This means that the last constructed vector, say it is $\bkappa$, belongs to the kernel and in each $2^{s-1}$th part has weight $n'$. 
	If $n'\not=1$, take $\bw=\bkappa+\pi_\ba(\bkappa)$ which is still into the kernel. However, the weight of $\bw$ is $2^{s-1}$ multiplied by and even number which is impossible. Hence, we conclude that the length $4n$ of the code $C$ is a power of two and, moreover, $\bkappa=\bc\in K(C)$. 
	Therefore, in any case, $\bc\in K(C)$. Vector $\bc$ is of the form $\ba^i$ for some index $i$ and we should have $\ba^{2i}\in \{\be,\bu \}$. Due to the type of code $C$ the case $\ba^{2i}=\bu$ is not possible, hence $\ba^{2i}=\be$ and so $i=2n$ and $\bc=\ba^{2n}$.

	Finally, take the projection of $\ba^{2i}$, for $i\in \{0\ldots 2n-1\}$, over the support of $\ba^{2n}$.
	The vectors $\ba^{2i}_p$ we obtain form a circulant Hadamard code of order $2n$ which contradicts \Cref{lemm:square}. This proves the remark.
\end{proof}

\begin{remark}\label{rem:2}
Circulant Hadamard codes of length $4n=2^s$, with $s\geq 3$ do not exist.
\end{remark}
\begin{proof}
	We show that the rank of such a codes, in case they exist, should be $r>2n$ and, from \Cref{aske}, this means that $s<3$ which is out of our statement.
	
	Using the polynomial characterization we have seen in \Cref{prop:str} we have that vectors in $C$ are $\ba, \ba^2, \ldots, \ba^{4n-1}$ and the complements, where $\ba^i$, in polynomial way, is $(1+x^i)g$ and $g$ is the generator polynomial of the corresponding circulant matrix.  From \Cref{lemm:2.4} vector $\bu$ belongs to the linear span of $\langle \ba \rangle$ and so rank of code $C$ is $r= 4n-1-d$, where $d=\Degree(\Gcd(g,x^{4n}+1))$.
	Now, assume that $r\leq 2n$ and so $\Gcd(g,x^{4n}+1)=(1+x)^d$, for some integer $d=4n-1-r\geq 2n-1$ and $g=(1+x)^dq(x)$, for some polynomial $q(x)$ without common factors with $x^{4n}+1$.
	Therefore, vectors in $C$ are, in polynomial form,  $(1+x^i)(1+x)^dq(x)$, for $i\in \{0,\ldots, 4n-1\}$, and the complements. 
	Specifically, $\ba^{2n}=(1+x^{2n})(1+x)^dq(x)=(1+x)^{2n+d}q(x) \bmod{x^{4n}+1}$. 
	For $d\geq 2n$ the above equality leads to a contradiction. 
	Hence $d=2n-1$ and $r=2n$. From \Cref{aske} we obtain $s=3$ and we already know by straight checking that this case does not give any circulant Hadamard code.
	This proves the statement.
\end{proof}
\printbibliography
\end{document}